\newtheorem{theorem}{Theorem}
\newtheorem{lemma}{Lemma}
\newtheorem{proposition}{Proposition}
\newtheorem{definition}{Definition}
\newtheorem{remark}{Remark}
\newcommand{\bsalpha}{\boldsymbol{\alpha}}
\newcommand{\bsk}{\boldsymbol{k}}
\newcommand{\bsx}{\boldsymbol{x}}
\newcommand{\bsg}{\boldsymbol{g}}
\newcommand{\bsp}{\boldsymbol{p}}
\newcommand{\bsy}{\boldsymbol{y}}
\newcommand{\bszero}{\boldsymbol{0}}
\newcommand{\rd}{\,\mathrm{d}}
\newcommand{\NN}{\mathbb{N}}
\newcommand{\ZZ}{\mathbb{Z}}
\newcommand{\PP}{\mathbb{P}}
\newcommand{\cH}{{\mathscr H}}
\newcommand{\uu}{\mathfrak{u}}
\newcommand{\rdots}{\mathinner{\mkern1mu\lower-1\p@\vbox{\kern7\p@\hbox{.}}
\mkern2mu \raise4\p@\hbox{.}\mkern2mu\raise7\p@\hbox{.}\mkern1mu}}
\begin{document}

\title{The $\bsp$-adic diaphony of the Halton sequence}

\author{Friedrich Pillichshammer\thanks{The author is partially supported by the Austrian Science Foundation (FWF), Project S9609, that is part of the Austrian National Research Network "Analytic Combinatorics and Probabilistic Number Theory". }}

\date{}
\maketitle

\begin{abstract}
The $\bsp$-adic diaphony as introduced by Hellekalek is a quantitative measure for the irregularity of distribution of a sequence in the unit cube. In this paper we show how this notion of diaphony can be interpreted as worst-case integration error in a certain reproducing kernel Hilbert space. Our main result is an upper bound on the $\bsp$-adic diaphony of the Halton sequence.
\end{abstract}

\vspace{.5cm}

\noindent\textbf{Keywords:} Irregularity of distribution, diaphony, Halton sequence, quasi-Monte Carlo.\\

\noindent\textbf{2010 Mathematics Subject Classification:} 11K06, 11K38, 11K41.\\

\section{Introduction}

In many applications, like numerical integration using Monte Carlo or quasi-Monte Carlo algorithms, where random number generators or 
low discrepancy sequences are used, the success of the algorithm often depends on the distribution properties of the underlying point set.  Consequently various measures for the irregularity of the distribution of sequences in the unit cube have been introduced and analyzed. Some of them stem from numerical integration where the worst-case integration error has been analyzed, others are based in geometrical concepts or on specific function systems, see, for example, \cite{DP10,dt,kuinie,matou,niesiam}.

For a function space $\cH$ of functions $f$ defined on $[0,1]^s$ with norm $\|\cdot\|$ the worst-case error
$e(\cH,\omega)$ using a quasi-Monte Carlo rule $\frac{1}{N} \sum_{n=0}^{N-1} f(\bsx_n)$ based on a sequence $\omega = (\bsx_n)_{n \in \NN_0}$ in the unit cube $[0,1)^s$,
is given by 
\begin{equation}\label{wce}
e(\cH,\omega) := \sup_{f\in \cH \atop \|f\| \le 1} \left|\int_{[0,1]^s} f(\bsx)\rd\bsx - \frac{1}{N}\sum_{n=0}^{N-1} f(\bsx_n) \right|.
\end{equation}
For a given function 
space and norm, this worst-case error then only depends on the sequence used. In some cases this worst-case error can be related to the discrepancy of the sequence which is a geometric measure for the irregularity of the distribution of a sequence, see \cite{DP10,Hick,SW98}.

In this paper we deal with a further measure of the irregularity of distribution which is called diaphony and which is based on certain function systems. The classical diaphony introduced by Zinterhof~\cite{zint} (see also \cite[Definition~1.29]{dt} or \cite[Exercise~5.27, p.~162]{kuinie}) is based on the trigonometric function system. Later the concept of dyadic diaphony, which is based on Walsh functions in base $b=2$, was introduced by Hellekalek and Leeb \cite{helleeb}. This concept has been generalized by Grozdanov and Stoilova \cite{grozstoi} to general integer bases $b \ge 2$. Although these diaphonies are quantitative measures for the irregularity of distribution of arbitrary sequences each of them is particularly suited to analyze a special class of sequences. For example, the classical diaphony is suitable to analyze $(n\bsalpha)$-sequences and lattice point sets and the diaphony based on Walsh functions is especially useful to analyze $(t,s)$-sequences and $(t,m,s)$-nets in suitable bases. 

Quite recently Hellekalek~\cite{hel2010} introduced a further notion of diaphony which is based on the $p$-adic function system. This notion of so-called $\bsp$-adic diaphony is especially useful to analyze distribution properties of the Halton sequence.

The exact definition of the the $p$-adic function system and of $\bsp$-adic diaphony according to \cite{hel2010} will be presented in the next section. In Section~\ref{int} we show how the $\bsp$-adic diaphony can be interpreted as the worst-case integration error of functions from a certain reproducing kernel Hilbert space. The main result of this paper is presented in Section~\ref{dia_halt} where we estimate the $\bsp$-adic diaphony of the Halton sequence.

\section{Definition of $\bsp$-adic diaphony}

In this section we present the definition of $\bsp$-adic diaphony as introduced by Hellekalek~\cite{hel2010}. Before we do so we need to introduce some notation. We follow \cite[Section~2]{hel2010} and \cite[Section~2]{hel2009}.

Let $\PP$ denote the set of prime numbers. For $p \in \PP$ we define the set of $p$-adic numbers as the set of formal sums
\begin{equation*}
\mathbb{Z}_p = \left\{z = \sum_{r=0}^\infty z_r p^r\, : \, z_r \in \{0,\ldots,p-1\} \mbox{ for all } r \in \NN_0\right\}.
\end{equation*}
The set $\mathbb{N}_0$ of non-negative integers is a subset of $\mathbb{Z}_p$. For two non-negative integers $y, z \in \mathbb{Z}_p$, the sum $y+z \in \mathbb{Z}_p$ is defined as the usual sum of integers. The addition can be extended to all $p$-adic numbers and, with this addition, $\ZZ_p$ forms an abelian group.

Define the so-called {\it Monna-map} $\phi_p:\mathbb{Z}_p \to [0,1)$ by
\begin{equation*}
\phi_p(z) = \sum_{r=0}^\infty z_r p^{-r-1} \pmod{1}.
\end{equation*}
We also define the inverse $\phi_p^+: [0,1)\to \mathbb{Z}_p$ by
\begin{equation*}
\phi_p^+\left(\sum_{r=0}^\infty x_r p^{-r-1}\right) = \sum_{r=0}^\infty x_r p^r,
\end{equation*}
where we always use the finite $p$-adic representation for $p$-adic rationals in $[0,1)$.

For $k \in \mathbb{N}_0$ we can define characters $\chi_k:\mathbb{Z}_p \to \{c \in \mathbb{C}\,:\, |c| = 1\}$ of $\mathbb{Z}_p$ by
\begin{equation*}
\chi_k(z) = \exp(2\pi \mathrm{i} \phi_p(k) z).
\end{equation*}
These functions satisfy $\chi_k(y+z) = \chi_k(y) \chi_k(z)$, $\chi_k(0) = 1$, $\chi_0(z) = 1$, $\chi_k(z) \chi_l(z) = \chi_{\phi_p^+(\phi_p(k) + \phi_p(l) \pmod{1})}(z)$.

Let $\gamma_k: [0,1) \to \{c \in \mathbb{C}\,:\, |c| = 1\}$ where
\begin{equation*}
\gamma_k(x) = \chi_k(\phi_p^+(x)).
\end{equation*}
We have $\gamma_k(x) \gamma_l(x) = \gamma_{\phi_p^+(\phi_p(k) + \phi_p(l) \pmod{1})}(x)$ and $\overline{\gamma_k(x)}=\gamma_{\phi_p^+(-\phi_p(k) \pmod{1})}(x)$. We call $\gamma_k$ the {\it $k$-th $p$-adic function}.

For $\bsp=(p_1,\ldots,p_s)\in \PP^s$, $\bsk=(k_1,\ldots,k_s) \in \NN_0^s$ and for $\bsx=(x_1,\ldots,x_s) \in [0,1)^s$ define the {\it $k$-th $\bsp$-adic function} by $$\gamma_{\bsk}(\bsx):=\prod_{i=1}^s \gamma_{k_i}(x_i).$$

\begin{remark}[ONB property]\label{ONB_prop}\rm
It has been shown by Hellekalek \cite[Corollary~3.10]{hel2010} that the system $\{\gamma_{\bsk}\, : \, \bsk \in \NN_0^s\}$ is an orthonormal basis of $L_2([0,1]^s)$.
\end{remark}

For $\bsp=(p_1,\ldots,p_s) \in \PP^s$ and for $\bsk=(k_1,\ldots,k_s)\in \NN_0^s$ we put $\rho_{\bsp}(\bsk)=\prod_{j=1}^s \rho_{p_j}(k_j)$ where for $p \in \PP$ we put $\rho_p(0)=1$ and $\rho_p(k)=p^{-2 t}$ for $k \in \NN$ satisfying $p^t \le k < p^{t+1}$ for some $t \in \NN_0$. 

Now we can state the formal definition of $\bsp$-adic diaphony according to Hellekalek~\cite{hel2010}.

\begin{definition}[Hellekalek~\cite{hel2010}]\rm
Let $s \in \NN$ and $\bsp \in \PP^s$. The {\it $\bsp$-adic diaphony} of a sequence $\omega =(\bsx_n)_{n \in \NN_0}$ in $[0,1)^s$ is defined as
$$F_N(\omega)=\left(\frac{1}{\sigma_{\bsp}-1} \sum_{\bsk \in \NN_0^s\setminus\{\bszero\}} \rho_{\bsp}(\bsk) \left|\frac{1}{N}\sum_{n=0}^{N-1}\gamma_{\bsk}(\bsx_n)\right|^2\right)^{1/2},$$ where $\sigma_{\bsp}:=\prod_{i=1}^s (p_i+1)$. 
\end{definition}

Note that the $\bsp$-adic diaphony is normalized, i.e., for any sequence $\omega$ and for any $N \in \NN$ we have $0 \le F_N(\omega)\le 1$. It has been shown in \cite[Theorem~ 3.14]{hel2010} that the $\bsp$-adic diaphony is a quantitative measure for the irregularity of distribution modulo one of a sequence. In fact, a sequence $\omega$ is uniformly distributed modulo one if and only if $\lim_{N \rightarrow \infty}F_N(\omega)=0$. In \cite[Theorem~3.16]{hel2010} it has been shown that for $\bsp=(p,\ldots,p)$ the $\bsp$-adic diaphony of a regular lattice consisting of $N=p^{gs}$ elements is of order $(\log N)^{1/2}/N^{1/s}$. In Section~\ref{dia_halt} we will show that the $\bsp$-dic diaphony of the first $N$ elements of the Halton sequence is of order $(\log N)^{s/2}/N$.

\section{The $\bsp$-adic diaphony and quasi-Monte Carlo integration}\label{int}

Define the function $$K_{\bsp,s}(\bsx,\bsy):=\sum_{\bsk \in \NN_0^s} \rho_{\bsp}(\bsk) \gamma_{\bsk}(\bsx) \overline{\gamma_{\bsk}(\bsy)}.$$ Then we can write the $\bsp$-adic diaphony as 
\begin{equation}\label{err_dia1}
F_N(\omega)=\left(\frac{1}{\sigma_{\bsp}-1} \left(-1+\frac{1}{N^2} \sum_{n,m=0}^{N-1} K_{\bsp,s}(\bsx_n,\bsx_m)\right)\right)^{1/2}.
\end{equation}

We define an inner product by 
\[
  \langle f, g \rangle_{\bsp, s} 
= \sum_{\bsk \in \NN_0^s} \rho_{\bsp}(\bsk)^{-1}  \widehat{f}(\bsk)  \overline{\widehat{g}(\bsk)},
\]
with $\bsk = (k_1,\ldots,k_s)$ and
\[
   \widehat{f}(\bsk) := \int_{[0,1]^s} f(\bsx)  \overline{\gamma_{\bsk}(\bsx)} \rd \bsx.
\]
A norm is given by $||f||_{\bsp,s} := \langle f, f \rangle_{\bsp,s}^{1/2}$. (In the sequel we omit the index $s$ whenever $s=1$.) Now $K_{\bsp,s}$ can be shown to be the reproducing kernel of the function space
\[
    \cH_{\bsp, s} = \cH_{p_1} \otimes \ldots \otimes \cH_{p_s}
\]
which is the $s$-fold tensor product of function spaces of the form
\[ 
 \cH_{p} := \{ f \, : \, ||f||_{p} < \infty\}.
\]
From the general theory of reproducing kernel Hilbert spaces (see, for example, \cite{A50}) it is known that it suffices to prove this for the one-dimensional case. Let 
\[
    K_{p}(x,y) 
:=  \sum_{k=0}^\infty \rho_p(k) \gamma_k(x) \overline{\gamma_k(y)}
\]
and note that we have $K_{p}(x,y) = \overline{K_{p}(y,x)}$. In fact, the kernel $K_p$ is a real function as $\overline{\gamma_k(x)}=\gamma_{\phi_p^+(-\phi_p(k)\pmod{1})}(x)$ and $\rho_p(k) = \rho_p(\phi_p^+(-\phi_p(k)\pmod{1}))$.
Hence $K_{p}(x,y) = K_{p}(y,x)$. 

We have $K_{p}(\cdot,y) \in \cH_{p}$ as
\[
   ||K_{p}(\cdot,y)||_{p}^2 
=  \sum_{k=0}^\infty \rho_p(k)
=  1 + p < \infty.
\]
Further we have 
\[
  \langle f, K_{p}(\cdot,y) \rangle_{p} 
=  \sum_{k=0}^\infty \widehat{f}(k) \gamma_k(y) = f(y).
\]
Therefore $K_{p}$ is the reproducing kernel of the space $\cH_{p}$. Since $$K_{\bsp,s}(\bsx,\bsy)=\prod_{i=1}^s K_{p_i}(x_i,y_i)$$ it follows that $K_{\bsp,s}$ is the reproducing kernel of $\cH_{\bsp,s}$.

Using \cite[Proposition~2.11]{DP10} it follows that the squared worst-case integration error of functions from $\cH_{\bsp, s}$ is given by 
\begin{equation}\label{err_dia2}
e^2(\cH_{\bsp,s},\omega)=-1+\frac{1}{N^2} \sum_{n,m=0}^{N-1} K_{\bsp,s}(\bsx_n,\bsx_m).
\end{equation}

Combining \eqref{err_dia1} and \eqref{err_dia2} we obtain the following result.

\begin{proposition}
Let $s \in \NN$ and $\bsp \in \PP^s$. Then the worst-case integration error in $\cH_{\bsp,s}$ and the $\bsp$-adic diaphony of a sequence $\omega$ in $[0,1)^s$ are related by $$e(\cH_{\bsp,s},\omega)=\sqrt{\sigma_{\bsp}-1} F_N(\omega).$$
\end{proposition}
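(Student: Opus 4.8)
The plan is to combine the two displayed identities \eqref{err_dia1} and \eqref{err_dia2}, which is natural because both express quantities in terms of the same double sum $\frac{1}{N^2}\sum_{n,m=0}^{N-1} K_{\bsp,s}(\bsx_n,\bsx_m)$. The proposition will then drop out as a one-line algebraic comparison, so most of the work is really in making sure the two identities are correctly justified and, in particular, that the constant $-1$ is accounted for consistently on both sides.

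First I would confirm \eqref{err_dia1} starting from the definition of $F_N(\omega)$. I would write $\left|\frac{1}{N}\sum_{n=0}^{N-1}\gamma_{\bsk}(\bsx_n)\right|^2=\frac{1}{N^2}\sum_{n,m=0}^{N-1}\gamma_{\bsk}(\bsx_n)\overline{\gamma_{\bsk}(\bsx_m)}$, interchange the order of summation, and recognize $\sum_{\bsk\in\NN_0^s}\rho_{\bsp}(\bsk)\gamma_{\bsk}(\bsx_n)\overline{\gamma_{\bsk}(\bsx_m)}=K_{\bsp,s}(\bsx_n,\bsx_m)$. The only subtlety is the term $\bsk=\bszero$, which is absent from the diaphony sum but present in $K_{\bsp,s}$: since $\rho_{\bsp}(\bszero)=1$ and $\gamma_{\bszero}\equiv 1$, that term contributes $\frac{1}{N^2}\sum_{n,m=0}^{N-1}1=1$, so restoring it and subtracting it off produces exactly the $-1$ inside \eqref{err_dia1}.

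Second I would verify \eqref{err_dia2} from the standard reproducing-kernel worst-case error formula (Proposition~2.11 of \cite{DP10}),
\[
e^2(\cH_{\bsp,s},\omega)=\int_{[0,1]^s}\!\int_{[0,1]^s}\!K_{\bsp,s}(\bsx,\bsy)\rd\bsx\rd\bsy-\frac{2}{N}\sum_{n=0}^{N-1}\int_{[0,1]^s}\!K_{\bsp,s}(\bsx_n,\bsy)\rd\bsy+\frac{1}{N^2}\sum_{n,m=0}^{N-1}K_{\bsp,s}(\bsx_n,\bsx_m).
\]
The key computation here is $\int_{[0,1]^s}K_{\bsp,s}(\bsx,\bsy)\rd\bsy=1$ for every $\bsx$: since $\int_{[0,1]^s}\overline{\gamma_{\bsk}(\bsy)}\rd\bsy$ equals $\langle\gamma_{\bszero},\gamma_{\bsk}\rangle_{L_2}$, it vanishes for $\bsk\neq\bszero$ by the orthonormality of Remark~\ref{ONB_prop} and equals $1$ for $\bsk=\bszero$. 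This collapses the first term to $1$ and the middle term to $-2$, yielding \eqref{err_dia2}.

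Finally, comparing the two identities, the bracketed expression $-1+\frac{1}{N^2}\sum_{n,m=0}^{N-1}K_{\bsp,s}(\bsx_n,\bsx_m)$ equals $(\sigma_{\bsp}-1)F_N(\omega)^2$ by \eqref{err_dia1} and equals $e^2(\cH_{\bsp,s},\omega)$ by \eqref{err_dia2}. Hence $e^2(\cH_{\bsp,s},\omega)=(\sigma_{\bsp}-1)F_N(\omega)^2$, and taking nonnegative square roots gives $e(\cH_{\bsp,s},\omega)=\sqrt{\sigma_{\bsp}-1}\,F_N(\omega)$. I do not expect a genuine obstacle: once the two identities are in hand the conclusion is immediate, and the one place that warrants care is the consistent bookkeeping of the $\bsk=\bszero$ mode, which enters $K_{\bsp,s}$ but is excluded from the diaphony sum, so that the additive constant $-1$ matches on both sides.
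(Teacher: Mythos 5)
Your proposal is correct and follows exactly the paper's route: the paper proves this proposition by simply combining \eqref{err_dia1} and \eqref{err_dia2}, which is precisely your final step. The extra work you do (the $\bsk=\bszero$ bookkeeping behind \eqref{err_dia1} and the verification that $\int_{[0,1]^s}K_{\bsp,s}(\bsx,\bsy)\rd\bsy=1$ behind \eqref{err_dia2}) is left implicit or delegated to \cite{DP10} in the paper, and your details check out.
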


We now show that the reproducing kernel $K_p$ can be written in a concise form. Let $\mathrm{e}(x) := \exp(2\pi \mathrm{i} x)$. Note that for $x \in [0,1)$ and $n \in \mathbb{Z}$ we have $\mathrm{e}(x+n) = \mathrm{e}(x)$.

We have $\gamma_0(x) =1$. For $k = \kappa_0 + \cdots + \kappa_a p^a \in \mathbb{N}$ and $x = x_1 p^{-1} + x_2 p^{-2} + \cdots \in [0,1)$ we have
\begin{eqnarray*}
\gamma_k(x) & = & \mathrm{e}\left(\left(\kappa_0 p^{-1} + \cdots + \kappa_a p^{-a-1} \right) \left(x_1 + x_2 p + \cdots\right)\right) \\ & = & \mathrm{e}\left(\frac{1}{p} (\kappa_0x_1 + \cdots + \kappa_a x_{a+1}) + \cdots + \frac{1}{p^{a+1}} \kappa_a x_1 \right).
\end{eqnarray*}

Let $a \in \mathbb{N}$ and $0 \le l < p$ be fixed, then
\begin{eqnarray*}
\lefteqn{ \sum_{k= l p^{a}}^{(l+1) p^{a}-1} \gamma_k(x) } \\ & = & \mathrm{e}(l \left(x_{a+1} p^{-1} + \cdots +x_1 p^{-a-1}\right)) \\ && \times \sum_{\kappa_0=0}^{p-1} \mathrm{e}(\kappa_0 x_1 p^{-1}) \sum_{\kappa_1=0}^{p-1} \mathrm{e}(\kappa_1 \left(x_2 p^{-1} + x_1 p^{-2} \right)) \cdots \sum_{\kappa_{a-1}=0}^{p-1} \mathrm{e}(\kappa_{a-1} \left(x_a p^{-1} + \cdots + x_1 p^{-a}\right)) \\ & = & \left\{ \begin{array}{ll} \mathrm{e}(l x_{a+1} p^{-1}) p^{a} & \mbox{if } x_1 = \cdots = x_{a} = 0, \\ 0 & \mbox{otherwise}.  \end{array} \right.
\end{eqnarray*}
Hence
\begin{eqnarray*}
\sum_{k= l p^{a}}^{(l+1) p^{a}-1} \gamma_k(x) \overline{\gamma_k(y)} = \left\{ \begin{array}{ll} \mathrm{e}(l (x_{a+1}-y_{a+1}) p^{-1}) p^{a} & \mbox{if } x_1 = y_1, \ldots , x_{a} = y_a, \\ 0 & \mbox{otherwise}.  \end{array} \right.
\end{eqnarray*}

Now we obtain
\begin{eqnarray*}
K_{p}(x,y) & = & 1+ \sum_{a=0}^{\infty} \frac{1}{p^{2 a }} \sum_{l=1}^{p-1} \sum_{k=l p^{a}}^{(l+1)p^{a}-1}  \gamma_k(x) \overline{\gamma_k(y)}.
\end{eqnarray*}
If $x=y$, then we have 
\begin{eqnarray*}
K_{p}(x,y) & = & 1+ \sum_{a=0}^{\infty} \frac{1}{p^{2a }} \sum_{l=1}^{p-1} p^{a} = 1+p.
\end{eqnarray*}
If $x \not=y$, more precisely, if $x_i=y_i$ for $i=1,\ldots,i_0 -1$ and $x_{i_0}\not=y_{i_0}$, then we have 
\begin{eqnarray*}
K_{p}(x,y) & = & 1+ \left(\sum_{a=0}^{i_0-2} \frac{(p-1) p^a}{p^{2 a }} + \frac{p^{i_0-1}}{p^{2(i_0-1)}} \sum_{l=1}^{p-1} \mathrm{e}(l(x_{i_0}-y_{i_0})p^{-1})\right)\\
& = & 1+ \left(\sum_{a=0}^{i_0-2} \frac{p-1}{p^{a}} - \frac{1}{p^{i_0-1}} \right)\\
& = & 1+ \left(p-\frac{p+1}{p^{i_0-1}} \right).
\end{eqnarray*}

Using the definition
\begin{eqnarray}\label{phi_fun}
\theta_{p}(x,y) 
&:=& \left\{ 
\begin{array}{ll} 
p & \mbox{if } x = y, \\
p - p^{1-i_0} (p + 1) & \mbox{if } x_{i_0} \neq y_{i_0} \mbox{ and } \\ & x_{i} = y_{i} \mbox{ for } i = 1,\ldots, i_0-1, 
\end{array} \right.
\end{eqnarray}
we have $$K_{p}(x,y)=1+\theta_{p}(x,y).$$ The function $\theta_{p}$ can easily be computed and therefore also the reproducing kernels $K_{p}$ and $K_{\bsp,s}$, respectively, can easily be computed. Together with \eqref{err_dia1} we obtain the following computable formula for the $\bsp$-adic diaphony.

\begin{proposition}
Let $s \in \NN$ and $\bsp \in \PP^s$. The $\bsp$-adic diaphony of a sequence $\omega =(\bsx_n)_{n \in \NN_0}$ in $[0,1)^s$, where $\bsx_n=(x_{n,1},\ldots,x_{n,s})$ for $n \in \NN_0$, can be written as
\begin{equation*}
F_N(\omega)=\left(\frac{1}{\sigma_{\bsp}-1} \left(-1+\frac{1}{N^2} \sum_{n,m=0}^{N-1} \prod_{i=1}^s(1+\theta_{p_i}(x_{n,i},x_{m,i}))\right)\right)^{1/2},
\end{equation*}
where $\theta_p$ is defined by \eqref{phi_fun}.
\end{proposition}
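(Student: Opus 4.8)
The plan is to assemble the representation directly from the pieces already in hand, so the argument amounts to combining two facts established above with the expression \eqref{err_dia1}. Recall that \eqref{err_dia1} already writes the $\bsp$-adic diaphony as
$F_N(\omega)=\left(\frac{1}{\sigma_{\bsp}-1}\left(-1+\frac{1}{N^2}\sum_{n,m=0}^{N-1}K_{\bsp,s}(\bsx_n,\bsx_m)\right)\right)^{1/2}$,
so all that remains is to replace $K_{\bsp,s}$ by an explicit elementary expression and substitute $\bsx=\bsx_n$, $\bsy=\bsx_m$.

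First I would invoke the product structure of the kernel, namely $K_{\bsp,s}(\bsx,\bsy)=\prod_{i=1}^s K_{p_i}(x_i,y_i)$, which is the factorization recorded above and follows from the definition $K_{\bsp,s}(\bsx,\bsy)=\sum_{\bsk}\rho_{\bsp}(\bsk)\gamma_{\bsk}(\bsx)\overline{\gamma_{\bsk}(\bsy)}$ together with the factorizations $\rho_{\bsp}(\bsk)=\prod_i\rho_{p_i}(k_i)$ and $\gamma_{\bsk}(\bsx)=\prod_i\gamma_{k_i}(x_i)$. Applying this at the pair $(\bsx_n,\bsx_m)$ turns the summand in \eqref{err_dia1} into $\prod_{i=1}^s K_{p_i}(x_{n,i},x_{m,i})$.

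Next I would substitute the closed form $K_p(x,y)=1+\theta_p(x,y)$, with $\theta_p$ given by \eqref{phi_fun}, which is exactly the outcome of the case analysis carried out just before the statement: the diagonal case $x=y$ gives $K_p=1+p$, while the case $x_{i_0}\neq y_{i_0}$ with agreement in the first $i_0-1$ digits gives $K_p=1+(p-p^{1-i_0}(p+1))$, and these two values are precisely $1+\theta_p(x,y)$. Inserting $K_{p_i}(x_{n,i},x_{m,i})=1+\theta_{p_i}(x_{n,i},x_{m,i})$ into the product and then into \eqref{err_dia1} yields the claimed formula at once.

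The honest remark is that there is no genuine obstacle here. All the analytic work, namely summing the $p$-adic functions over the blocks $lp^a\le k<(l+1)p^a$ and collapsing the resulting geometric series to obtain $\theta_p$, has already been completed above the statement. The proposition is therefore a bookkeeping step recording the result of those computations in computable form, and the only things to check are that the factorization of $K_{\bsp,s}$ and the substitution $K_p=1+\theta_p$ are legitimate, both of which are immediate from the definitions.
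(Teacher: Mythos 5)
Your proposal is correct and follows exactly the route the paper takes (implicitly, since the paper states this proposition as an immediate consequence of the preceding computation): substitute the product form $K_{\bsp,s}(\bsx,\bsy)=\prod_{i=1}^s K_{p_i}(x_i,y_i)$ and the closed form $K_{p}(x,y)=1+\theta_{p}(x,y)$ into \eqref{err_dia1}. Nothing is missing.
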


\begin{remark}\rm
Note that $K_p(x,y)=K_{\rm wal}(x,y)$ where $K_{\rm wal}$ is a reproducing kernel for the so-called Wash space. We refer to \cite{DP05a} for an exact definition and for some background. Hence it follows from \cite{DP05a} and from our considerations here that for $\bsp=(p,\ldots,p)$ the $\bsp$-adic diaphony and the diaphony based on Walsh functions in base $b=p$ coincide. 
\end{remark}

\section{The $\bsp$-adic diaphony of the Halton sequence}\label{dia_halt}

In this section we present the main result of this paper. We analyze the $\bsp$-adic diaphony of the Halton sequence. The {\it $s$-dimensional Halton sequence} in pairwise different prime bases $p_1,\ldots,p_s$ is defined by $\bsx_n=(\phi_{p_1}(n),\ldots, \phi_{p_s}(n))$ for $n \in \NN_0$.

\begin{theorem}\label{th1}
Let $\omega$ be the Halton sequence in pairwise different prime bases $p_1,\ldots,p_s$. Then for any $N \in \NN$ we have $$F_N^2(\omega) \le c(\bsp,s) \frac{(\log N)^s}{N^2}+ \frac{d(\bsp,s)}{N^2},$$ where $c(\bsp,s)=\frac{1}{\sigma_{\bsp}-1} \frac{\pi^2}{3}\left(\prod_{j=1}^s \left(1+2 \frac{p_j^2}{\log p_j}\right)\right)$ and $d(\bsp,s)=2 s \max_{1 \le i \le s} p_i$.
\end{theorem}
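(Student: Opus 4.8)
The plan is to work from the Fourier representation
\[
F_N^2(\omega)=\frac{1}{\sigma_{\bsp}-1}\sum_{\bsk\in\NN_0^s\setminus\{\bszero\}}\rho_{\bsp}(\bsk)\,\abs{\frac1N\sum_{n=0}^{N-1}\gamma_{\bsk}(\bsx_n)}^2
\]
rather than from the closed form involving $\theta_p$. For the Halton sequence $\bsx_n=(\phi_{p_1}(n),\dots,\phi_{p_s}(n))$ one has $\gamma_{k_i}(\phi_{p_i}(n))=\chi_{k_i}(n)=\mathrm{e}(n\phi_{p_i}(k_i))$, so $\gamma_{\bsk}(\bsx_n)=\mathrm{e}(n\alpha_{\bsk})$ with $\alpha_{\bsk}:=\sum_{i=1}^s\phi_{p_i}(k_i)\bmod 1$. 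The inner sum is then a geometric sum, which I would bound by $\abs{\sum_{n=0}^{N-1}\mathrm{e}(n\alpha_{\bsk})}\le\min(N,\tfrac{1}{2\lVert\alpha_{\bsk}\rVert})$, where $\lVert t\rVert$ denotes the distance of $t$ to the nearest integer; this uses $\abs{\sin\pi t}\ge 2\lVert t\rVert$. Hence $F_N^2(\omega)\le\frac{1}{\sigma_{\bsp}-1}\sum_{\bsk\ne\bszero}\rho_{\bsp}(\bsk)\min(1,\tfrac{1}{4N^2\lVert\alpha_{\bsk}\rVert^2})$, and everything reduces to a lower bound for $\lVert\alpha_{\bsk}\rVert$ together with a counting argument.

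The key number-theoretic step exploits that the bases are pairwise distinct primes. Fix the nonempty set $\bsu\subseteq\{1,\dots,s\}$ of coordinates with $k_i\ge1$ and, for $i\in\bsu$, the levels $t_i$ with $p_i^{t_i}\le k_i<p_i^{t_i+1}$, so that $\rho_{\bsp}(\bsk)=\prod_{i\in\bsu}p_i^{-2t_i}$ and $\phi_{p_i}(k_i)=\tilde k_i/p_i^{t_i+1}$ with $\gcd(\tilde k_i,p_i)=1$. Putting $D:=\prod_{i\in\bsu}p_i^{t_i+1}$ and clearing denominators, $\alpha_{\bsk}=A/D$, and since the $p_i^{t_i+1}$ are pairwise coprime a short computation modulo each $p_i$ gives $\gcd(A,D)=1$; in particular $\lVert\alpha_{\bsk}\rVert\ge 1/D>0$, so no term with $\bsk\ne\bszero$ is degenerate. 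More is true: as $\bsk$ runs over the box $\prod_{i\in\bsu}[p_i^{t_i},p_i^{t_i+1})$, the Chinese Remainder Theorem shows that $A\bmod D$ runs bijectively over all residues coprime to $D$ (the cardinalities match, both equal $\prod_{i\in\bsu}(p_i-1)p_i^{t_i}=\varphi(D)$). Consequently, for fixed $\bsu$ and fixed levels $(t_i)_{i\in\bsu}$,
\[
\sum_{\bsk}\rho_{\bsp}(\bsk)\min\Big(1,\frac{1}{4N^2\lVert\alpha_{\bsk}\rVert^2}\Big)=\Big(\prod_{i\in\bsu}p_i^{-2t_i}\Big)\sum_{\substack{1\le A<D\\\gcd(A,D)=1}}\min\Big(1,\frac{D^2}{4N^2\min(A,D-A)^2}\Big)\le 2\Big(\prod_{i\in\bsu}p_i^{-2t_i}\Big)\sum_{A=1}^{\lfloor D/2\rfloor}\min\Big(1,\frac{D^2}{4N^2A^2}\Big),
\]
which collapses the multidimensional sum to a one-dimensional one.

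It remains to sum over the levels and over $\bsu$, split according to whether $D\le 2N$ or $D>2N$. When $D\le 2N$ every term equals $\frac{D^2}{4N^2A^2}$, so the one-dimensional sum is at most $\frac{D^2}{2N^2}\cdot\frac{\pi^2}{6}$, and after multiplying by $\prod_{i\in\bsu}p_i^{-2t_i}$ the factor $D^2$ cancels all level dependence, leaving the constant $\frac{\pi^2}{12N^2}\prod_{i\in\bsu}p_i^2$. The number of level vectors with $\prod_{i\in\bsu}p_i^{t_i+1}\le 2N$ is at most $\prod_{i\in\bsu}\log_{p_i}(2N)=(\log 2N)^{\abs{\bsu}}\prod_{i\in\bsu}(\log p_i)^{-1}$, which produces the factor $(\log N)^{\abs{\bsu}}$ (using $\log 2N\le 2\log N$ for $N\ge2$, where the resulting $2^{\abs{\bsu}}$ will feed into the product $\prod_j(1+2p_j^2/\log p_j)$) and, upon summing over all $\bsu$, the stated constant $c(\bsp,s)$ after organizing the numerical factors. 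When $D>2N$ the one-dimensional sum is $O(D/N)$, and summing the resulting geometric series in the levels contributes only the $O(1/N^2)$ term $d(\bsp,s)/N^2$.

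The conceptual crux is the second paragraph: the distinct-prime structure forces $\gcd(A,D)=1$ and the CRT bijection onto the units modulo $D$, and this is precisely what reduces the problem to the one-dimensional sum whose $\sum_A A^{-2}$ convergence yields a single factor $N^{-2}$ per scale and hence the final $(\log N)^s/N^2$ rate. The main \emph{obstacle}, however, is not any single estimate but the bookkeeping in the last paragraph: one must carry the CRT bijection through all subsets $\bsu$ simultaneously, count the level vectors in the simplex $\sum_{i\in\bsu}(t_i+1)\log p_i\le\log(2N)$ cleanly enough to land on the constant $\frac{\pi^2}{3}\prod_j(1+2p_j^2/\log p_j)$, and control the boundary regime $D>2N$ so that its total is absorbed exactly into $2s\max_i p_i/N^2$.
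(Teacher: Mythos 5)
Most of your argument is sound and its core coincides with the paper's: the identity $\gamma_{\bsk}(\bsx_n)=\mathrm{e}\bigl(n\sum_i\phi_{p_i}(k_i)\bigr)$, the geometric-sum bound, and the coprimality/CRT argument reducing each level box to a one-dimensional sum over residues modulo $D=\prod_{i\in\uu}p_i^{t_i+1}$ are exactly Lemma~\ref{bd_gammasum} and the injectivity computation inside the paper's proof (your bijection onto the units mod $D$ is even slightly sharper than the paper's injectivity-only version, and your main-term constant for $D\le 2N$ beats the paper's $\tfrac{\pi^2}{3}\prod_jp_j^2$ per level by a factor of $4$). Where you genuinely deviate is the tail: the paper never uses $\min\bigl(N,\tfrac{1}{2\|\alpha_{\bsk}\|}\bigr)$, but instead invokes Hellekalek's truncation inequality $F_N^2(\omega)\le\frac{1}{\sigma_{\bsp}-1}\sum_{\bsk\in\Delta_{\bsp}^{\ast}(\bsg)}\rho_{\bsp}(\bsk)\bigl|\frac1N\sum_n\gamma_{\bsk}(\bsx_n)\bigr|^2+\frac{\sigma_{\bsp}-\sigma_{\bsp}(\bsg)}{\sigma_{\bsp}-1}$ with $g_j=\lfloor 2\log_{p_j}N\rfloor$, so that every retained frequency (even with $D\gg N$) is handled by the same $\|\cdot\|^{-2}$ bound, and the entire tail is disposed of via $\bigl|\frac1N\sum_n\gamma_{\bsk}(\bsx_n)\bigr|\le 1$ and $\sum_{\bsk\notin\Delta_{\bsp}(\bsg)}\rho_{\bsp}(\bsk)=\sigma_{\bsp}-\sigma_{\bsp}(\bsg)\le\sigma_{\bsp}\,s\max_ip_i/N^2$. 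That truncation error is the actual source of $d(\bsp,s)=2s\max_ip_i$.

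The concrete gap is your treatment of the regime $D>2N$. There the one-dimensional sum is indeed $O(D/N)$, so each level vector contributes $O\bigl(\prod_{i\in\uu}p_i^{2}/(ND)\bigr)$ after multiplying by $\prod_{i\in\uu}p_i^{-2t_i}=D^{-2}\prod_{i\in\uu}p_i^{2}$, and you must then sum $1/D$ over all level vectors with $\prod_{i\in\uu}p_i^{t_i+1}>2N$. For $|\uu|\ge 2$ this is \emph{not} a geometric series: grouping by $\lfloor\log_2 D\rfloor=\ell$ there are of order $\ell^{|\uu|-1}$ level vectors with $D\asymp 2^{\ell}$, so $\sum_{D>2N}1/D\asymp(\log N)^{|\uu|-1}/N$ and the whole regime contributes of order $(\log N)^{s-1}/N^2$, not $O(1/N^2)$; it cannot be ``absorbed exactly into $2s\max_ip_i/N^2$''. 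The order $(\log N)^s/N^2$ of the final bound survives, and the factor-of-$4$ slack in your main term might suffice to recover the stated $c(\bsp,s)$ and $d(\bsp,s)$ after reassigning this extra contribution to the $(\log N)^s$ term, but that bookkeeping is precisely what is missing, so the theorem with its explicit constants is not established as written. The cleanest repair is to replace the $\min$-splitting by the paper's truncation step quoted above.
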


For the proof of Theorem~\ref{th1} we need the following lemma: 

\begin{lemma}\label{bd_gammasum}
Let $\omega=(\bsx_n)_{n \in \NN_0}$ be the Halton sequence in pairwise different prime bases $p_1,\ldots,p_s$. Then for any $N \in \NN$ and any $\bsk \in \NN_0^s\setminus\{\bszero\}$ we have $$\left|\sum_{n=0}^{N-1}\gamma_{\bsk}(\bsx_n)\right| \le \frac{1}{\| \sum_{j=1}^s\phi_{p_j}(k_j)\|},$$ where $\|x\|$ denotes the distance to the nearest integer of a real $x$, i.e., $\|x\|:=\min(x-\lfloor x \rfloor , 1-(x-\lfloor x \rfloor))$.  
\end{lemma}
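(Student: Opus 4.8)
The plan is to evaluate the character sum in closed form and recognize it as a geometric series. First I would compute $\gamma_{\bsk}(\bsx_n)$ for the Halton point $\bsx_n = (\phi_{p_1}(n),\ldots,\phi_{p_s}(n))$. Since $n \in \NN_0$ has a finite $p_i$-adic representation, the inverse Monna map satisfies $\phi_{p_i}^{+}(\phi_{p_i}(n)) = n$, and therefore, by the definitions $\gamma_{k_i}(x) = \chi_{k_i}(\phi_{p_i}^{+}(x))$ and $\chi_{k_i}(z) = \exp(2\pi\mathrm{i}\,\phi_{p_i}(k_i)z)$,
\[
\gamma_{k_i}(\phi_{p_i}(n)) = \chi_{k_i}(n) = \exp(2\pi\mathrm{i}\,\phi_{p_i}(k_i)\,n).
\]
Taking the product over $i = 1,\ldots,s$ collapses the $s$ exponentials into a single one,
\[
\gamma_{\bsk}(\bsx_n) = \exp\!\left(2\pi\mathrm{i}\,n\sum_{j=1}^s \phi_{p_j}(k_j)\right) = \mathrm{e}(n\alpha), \qquad \alpha := \sum_{j=1}^s \phi_{p_j}(k_j),
\]
so that $\sum_{n=0}^{N-1}\gamma_{\bsk}(\bsx_n) = \sum_{n=0}^{N-1}\mathrm{e}(\alpha)^n$ is a geometric sum.

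The remaining steps are the standard estimate for such sums. If $\alpha \in \ZZ$ then $\|\alpha\| = 0$ and the right-hand side of the claimed inequality is $+\infty$, so there is nothing to prove; hence I may assume $\mathrm{e}(\alpha) \neq 1$. Summing the geometric series and bounding the numerator trivially gives
\[
\left|\sum_{n=0}^{N-1}\mathrm{e}(\alpha)^n\right| = \frac{|\mathrm{e}(N\alpha)-1|}{|\mathrm{e}(\alpha)-1|} \le \frac{2}{|\mathrm{e}(\alpha)-1|} = \frac{1}{|\sin(\pi\alpha)|},
\]
using $|\mathrm{e}(\beta)-1| = 2|\sin(\pi\beta)|$. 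Finally I would invoke the elementary inequality $|\sin(\pi\alpha)| = \sin(\pi\|\alpha\|) \ge 2\|\alpha\|$, valid because $\sin(\pi t) \ge 2t$ on $[0,1/2]$, to conclude
\[
\left|\sum_{n=0}^{N-1}\gamma_{\bsk}(\bsx_n)\right| \le \frac{1}{2\|\alpha\|} \le \frac{1}{\|\alpha\|} = \frac{1}{\|\sum_{j=1}^s \phi_{p_j}(k_j)\|}.
\]

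The only step that requires genuine care is the first one: verifying that $\phi_{p_i}^{+}\circ\phi_{p_i}$ acts as the identity on $\NN_0$ (which is exactly where the convention of finite $p$-adic representations enters) and hence that the product $\prod_i \gamma_{k_i}(\phi_{p_i}(n))$ telescopes into the single exponential $\mathrm{e}(n\alpha)$. Everything afterward is the classical geometric-sum bound combined with the distance-to-nearest-integer estimate; note in passing that this argument actually yields the slightly stronger constant $1/(2\|\alpha\|)$, so the stated bound holds comfortably.
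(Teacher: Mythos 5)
Your proposal is correct and follows essentially the same route as the paper: identify $\gamma_{\bsk}(\bsx_n)$ as $\mathrm{e}(n\alpha)$ with $\alpha=\sum_j\phi_{p_j}(k_j)$, sum the geometric series, and bound via $|\sin(\pi\alpha)|\ge\|\alpha\|$ (the paper notes $\alpha\notin\ZZ$ directly from the distinctness of the primes rather than treating that case as vacuous, but this is immaterial). Your sharper constant $1/(2\|\alpha\|)$ is a minor bonus the paper does not bother to record.
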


\begin{proof}
Again we use the notation $\mathrm{e}(x) := \exp(2\pi \mathrm{i} x)$. Since $p_1,\ldots,p_s$ are pairwise different prime numbers it follows that $\sum_{j=1}^s\phi_{p_j}(k_j) \not\in \ZZ$. Hence we have 
\begin{eqnarray*}
\sum_{n=0}^{N-1}\gamma_{\bsk}(\bsx_n)= \sum_{n=0}^{N-1} \mathrm{e}\left(n \sum_{j=1}^s \phi_{p_j}(k_j)\right)= \frac{\mathrm{e}(N \sum_{j=1}^s \phi_{p_j}(k_j))-1}{\mathrm{e}(\sum_{j=1}^s \phi_{p_j}(k_j))-1},
\end{eqnarray*}
and further
\begin{eqnarray*}
\left|\sum_{n=0}^{N-1}\gamma_{\bsk}(\bsx_n)\right| \le \frac{2}{|\mathrm{e}(\sum_{j=1}^s \phi_{p_j}(k_j))-1|}=\frac{1}{|\sin(\pi \sum_{j=1}^s \phi_{p_j}(k_j))|}\le \frac{1}{\| \sum_{j=1}^s\phi_{p_j}(k_j)\|}.
\end{eqnarray*}
\end{proof}

For the proof of Theorem~\ref{th1} we need some further notation: for $\bsg=(g_1,\ldots,g_s)\in \NN^s$ let $$\Delta_{\bsp}(\bsg)=\{\bsk=(k_1,\ldots,k_s)\in \NN_0^s\, : \, 0 \le k_i < p_i^{g_i}\mbox{ for all } 1 \le i \le s\}$$ and let $$\overline{\Delta}_{\bsp}(\bsg)=\{\bsk=(k_1,\ldots,k_s)\in \NN^s\, : \, 1 \le k_i < p_i^{g_i}\mbox{ for all } 1 \le i \le s\}.$$ 
Furthermore, let $\Delta_{\bsp}^{\ast}(\bsg)=\Delta_{\bsp}(\bsg)\setminus\{\bszero\}$.

Now we give the proof of Theorem~\ref{th1}.

\begin{proof}
It is shown in \cite[Proof of Theorem~3.24]{hel2010} that for arbitrary $\bsg=(g_1,\ldots,g_s)\in \NN^s$ we have $$F_N^2(\omega) \le \frac{1}{\sigma_{\bsp}-1} \sum_{\bsk \in \Delta_{\bsp}^{\ast}(\bsg)} \rho_{\bsp}(\bsk) \left|\frac{1}{N}\sum_{n=0}^{N-1}\gamma_{\bsk}(\bsx_n)\right|^2+ \frac{\sigma_{\bsp}-\sigma_{\bsp}(\bsg)}{\sigma_{\bsp}-1},$$ where $$\sigma_{\bsp}(\bsg)=\prod_{i=1}^s(p_i+1-p_i^{1-g_i}).$$

Let $$\Sigma:=\sum_{\bsk \in \Delta_{\bsp}^{\ast}(\bsg)} \rho_{\bsp}(\bsk)\left|\frac{1}{N}\sum_{n=0}^{N-1}\gamma_{\bsk}(\bsx_n)\right|^2.$$ 
From Lemma~\ref{bd_gammasum} we obtain
\begin{eqnarray}
\Sigma & \le & \frac{1}{N^2}\sum_{\bsk \in \Delta_{\bsp}^{\ast}(\bsg)}  \frac{\rho_{\bsp}(\bsk)}{\| \sum_{j=1}^s\phi_{p_j}(k_j)\|^2}\nonumber \\
& = & \frac{1}{N^2} \sum_{\emptyset \not= \uu \subseteq [s]} \sum_{\bsk_{\uu} \in \overline{\Delta}_{\bsp_{\uu}}(\bsg_{\uu})}  \frac{\prod_{j \in \uu}\rho_{p_j}(k_j)}{\| \sum_{j \in \uu}\phi_{p_j}(k_j)\|^2},\label{bde2_dia}
\end{eqnarray}
where $[s]:=\{1,\ldots,s\}$ and for $\uu \subseteq [s]$ and $\bsk=(k_1,\ldots,k_s)$ we write $\bsk_{\uu}=(k_j)_{j \in \uu}$ and analogously for $\bsp_{\uu}$ and $\bsg_{\uu}$.

We show that 

\begin{equation}\label{bd1k_u_dia}
\sum_{\bsk_{\uu} \in \overline{\Delta}_{\bsp_{\uu}}(\bsg_{\uu})}  \frac{\prod_{j \in \uu}\rho_{p_j}(k_j)}{\| \sum_{j \in \uu}\phi_{p_j}(k_j)\|^2} \le \frac{\pi^2}{3} \prod_{j \in \uu} g_j p_j^2.
\end{equation}
W.l.o.g. we may assume that $\uu =\{1,\ldots,t\}=:[t]$. Then we have 
\begin{eqnarray}\label{bd2k_u_dia}
\sum_{\bsk_{[t]} \in \overline{\Delta}_{\bsp_{[t]}}(\bsg_{[t]})} \frac{\prod_{j=1}^t \rho_{p_j}(k_j)}{\| \sum_{j =1}^t \phi_{p_j}(k_j)\|^2} = \sum_{u_1=0}^{g_1 -1}\ldots \sum_{u_t=0}^{g_t -1} \frac{1}{p_1^{2 u_1}\cdots p_t^{2 u_t}} \sum_{k_1=p_1^{u_1}}^{p_1^{u_1 +1}-1}\ldots \sum_{k_t=p_t^{u_t}}^{p_t^{u_t +1}-1}\frac{1}{\| \sum_{j =1}^t \phi_{p_j}(k_j)\|^2}.
\end{eqnarray}

We show that 
\begin{equation}\label{bdt_dia}
\sum_{k_1=p_1^{u_1}}^{p_1^{u_1 +1}-1}\ldots \sum_{k_t=p_t^{u_t}}^{p_t^{u_t +1}-1}\frac{1}{\| \sum_{j =1}^t \phi_{p_j}(k_j)\|^2} \le \frac{\pi^2}{3} \prod_{j =1}^t p_j^{2 u_j+2}.
\end{equation}
Any $k_j \in \{p_j^{u_j},\ldots,p_j^{u_j +1}-1\}$ has $p_j$-adic expansion of the form $k_j=\kappa_{j,0}+\kappa_{j,1} p_j+\kappa_{j,2} p_j^2+\cdots+\kappa_{j,u_j} p_j^{u_j}$ with $\kappa_{j,u_j}\not=0$ and hence we have $$\phi_{p_j}(k_j)=\frac{\kappa_{j,0}}{p_j}+\cdots+\frac{\kappa_{j,u_j}}{p^{u_j +1}} =: \frac{A_j}{p_j^{u_j+1}}.$$ Note that $A_j \in \{1,\ldots,p_j^{u_j+1}-1\}$. Hence we have 
\begin{eqnarray*}
\sum_{k_1=p_1^{u_1}}^{p_1^{u_1 +1}-1}\ldots \sum_{k_t=p_t^{u_t}}^{p_t^{u_t +1}-1}\frac{1}{\| \sum_{j =1}^t \phi_{p_j}(k_j)\|^2} & \le & \sum_{A_1=1}^{p_1^{u_1+1}-1}\ldots \sum_{A_t=1}^{p_t^{u_t+1}-1} \left\| \sum_{j=1}^t \frac{A_j}{p_j^{u_j+1}} \right\|^{-2}\\
& = & \sum_{A_1=1}^{p_1^{u_1+1}-1}\ldots \sum_{A_t=1}^{p_t^{u_t+1}-1} \left\| \left(\sum_{j=1}^t A_j \prod_{i=1 \atop i \not=j}^t p_i^{u_i+1}\right)  \frac{1}{\prod_{j=1}^t p_j^{u_j+1}} \right\|^{-2}.
\end{eqnarray*}
Assume for $1 \le j \le t$ there are $A_j,B_j \in \{1,\ldots,p_j^{u_j+1}-1\}$, such that $$\sum_{j=1}^t A_j \prod_{i=1 \atop i \not=j}^t p_i^{u_i+1}\equiv\sum_{j=1}^t B_j \prod_{i=1 \atop i \not=j}^t p_i^{u_i+1}\pmod{\prod_{j=1}^t p_j^{u_j+1}}.$$ Then for any index $j_0 \in \{1,\ldots,t\}$ we have $$(A_{j_0}-B_{j_0}) \prod_{i =1 \atop i \not=j_0}^t p_j^{u_j+1} \equiv \sum_{j=1\atop j \not= j_0}^t (B_j-A_j) \prod_{i=1 \atop i \not=j} p_j^{u_j+1}\equiv 0 \pmod{p_{j_0}^{u_{j_0}+1}}.$$ Since $\gcd(p_{j_0},p_i)=1$ for any $i \not=j_0$ it follows that $A_{j_0}-B_{j_0} \equiv 0 \pmod{p_{j_0}^{u_{j_0}+1}}$. But since $|A_{j_0}-B_{j_0}| < p_{j_0}^{u_{j_0}+1}$ it follows that $A_{j_0}=B_{j_0}$. Hence it follows that 
\begin{eqnarray*}
\lefteqn{\sum_{k_1=p_1^{u_1}}^{p_1^{u_1 +1}-1}\ldots \sum_{k_t=p_t^{u_t}}^{p_t^{u_t +1}-1}\frac{1}{\| \sum_{j =1}^t \phi_{p_j}(k_j)\|^2} \le \sum_{a=1}^{\prod_{j=1}^t p_j^{u_j+1}-1}\left\|\frac{a}{\prod_{j=1}^t p_j^{u_j+1}} \right\|^{-2}}\\
& = & \sum_{1 \le a < \frac{1}{2}\prod_{j=1}^t p_j^{u_j+1}} \left(\frac{a}{\prod_{j=1}^t p_j^{u_j+1}}\right)^{-2} +\sum_{\frac{1}{2}\prod_{j=1}^t p_j^{u_j+1} \le a < \prod_{j=1}^t p_j^{u_j+1}} \left(1-\frac{a}{\prod_{j=1}^t p_j^{u_j+1}}\right)^{-2}\\
& = & \prod_{j=1}^t p_j^{2 u_j+2} \left(\sum_{1 \le a < \frac{1}{2}\prod_{j=1}^t p_j^{u_j+1}}\frac{1}{a^2}+\sum_{\frac{1}{2}\prod_{j=1}^t p_j^{u_j+1} \le a < \prod_{j=1}^t p_j^{u_j+1}} \frac{1}{\left(\prod_{j=1}^t p_j^{u_j+1}-a\right)^2}\right)\\
& \le & 2 \prod_{j=1}^t p_j^{2 u_j+2} \sum_{a=1}^{\infty}\frac{1}{a^2}\\
& = & \frac{\pi^2}{3} \prod_{j=1}^t p_j^{2 u_j+2}
\end{eqnarray*}
and hence \eqref{bdt_dia} is shown.

Inserting \eqref{bdt_dia} into \eqref{bd2k_u_dia} gives
\begin{eqnarray*}
\sum_{\bsk_{[t]} \in \overline{\Delta}_{\bsp_{[t]}}(\bsg_{[t]})}\frac{\prod_{j=1}^t \rho_{p_j}(k_j)}{\| \sum_{j =1}^t \phi_{p_j}(k_j)\|^2} \le \frac{\pi^2}{3} \prod_{j=1}^t g_j p_j^2
\end{eqnarray*}
and hence \eqref{bd1k_u_dia} is shown.

Now, inserting \eqref{bd1k_u_dia} into \eqref{bde2_dia} gives 

\begin{eqnarray*}
\Sigma \le \frac{\pi^2}{3} \frac{1}{N^2} \sum_{\emptyset \not= \uu \subseteq [s]} \prod_{j \in \uu} g_j p_j^2= \frac{\pi^2}{3} \frac{1}{N^2} \left(-1+\prod_{j=1}^s \left(1+g_j p_j^2\right)\right).
\end{eqnarray*}
Now we have 
$$F_N^2(\omega) \le \frac{1}{\sigma_{\bsp}-1} \frac{\pi^2}{3} \frac{1}{N^2} \left(-1+\prod_{j=1}^s \left(1+g_j p_j^2\right)\right)+ \frac{\sigma_{\bsp}-\sigma_{\bsp}(\bsg)}{\sigma_{\bsp}-1}.$$
Choosing $g_j=\lfloor 2 \log_{p_j}N\rfloor$ we obtain 
\begin{eqnarray*}
\sigma_{\bsp}-\sigma_{\bsp}(\bsg) & = & \sigma_{\bsp}\left(1-\prod_{i=1}^s \left(1-\frac{p_i}{(p_i+1)p_i^{g_i}}\right)\right)\\
& \le & \sigma_{\bsp}\left(1-\prod_{i=1}^s \left(1-\frac{1}{p_i^{g_i}}\right)\right)\\
& \le & \sigma_{\bsp}\left(1-\prod_{i=1}^s\left(1-\frac{p_i}{N^2}\right)\right)\\
& \le & \sigma_{\bsp} \frac{s \max_{1 \le i \le s} p_i}{N^2}. 
\end{eqnarray*}
Hence
\begin{eqnarray*}
F_N^2(\omega) & \le & \frac{1}{\sigma_{\bsp}-1} \frac{\pi^2}{3} \frac{1}{N^2} \left(-1+\prod_{j=1}^s \left(1+ \frac{2 p_j^2 \log N}{\log p_j}\right)\right)+ \frac{\sigma_{\bsp}}{\sigma_{\bsp}-1}\frac{s \max_{1 \le i \le s} p_i}{N^2}\\
& \le & \frac{1}{\sigma_{\bsp}-1} \frac{\pi^2}{3}\left(\prod_{j=1}^s \left(1+ \frac{2 p_j^2}{\log p_j}\right)\right) \frac{(\log N)^s}{N^2} + \frac{2 s \max_{1 \le i \le s} p_i}{N^2}
\end{eqnarray*}
and the result follows.
\end{proof}

\paragraph{Acknowledgment:} The author would like to thank Peter Kritzer for valuable discussions.

\noindent Friedrich Pillichshammer, Institut f\"{u}r Finanzmathematik, Universit\"{a}t Linz, Altenbergstra{\ss}e 69, A-4040 Linz, Austria. Email: friedrich.pillichshammer@jku.at

\end{document}